\newcommand{\nc}{\newcommand}
\nc{\bC}{\bold{C}} \nc{\bN}{\Bbb{N}} \nc{\cF}{\mathcal{F}}
\nc{\cE}{\mathcal{E}} \nc{\cR}{\mathcal{R}} \nc{\cM}{\mathcal{M}}
\nc{\al}{\alpha} \nc{\bt}{\beta} \nc{\gm}{\gamma} \nc{\dl}{\delta}
\nc{\om}{\omega} \nc{\sg}{\sigma} \nc{\Sg}{\Sigma} \nc{\vf}{\varphi}
\nc{\ve}{\varepsilon} \nc{\os}{\overset} \nc{\ol}{\overline}
\nc{\ul}{\underline} \nc{\us}{\underset} \nc{\sbs}{\subset}
\nc{\bsl}{\backslash} \nc{\Ra}{\Rightarrow}
\nc{\lra}{\longrightarrow} \nc{\all}{\allowdisplaybreaks}
\nc{\Codes}{\operatorname{{\bold{Codes}}}}
\nc{\RegMono}{\operatorname{\mathcal{R}{\rm{eg}\mathcal{M}{\rm{ono}\!}}}}
\nc{\RegEpi}{\operatorname{\mathcal{R}{\rm{eg}\mathcal{E}{\rm{pi}\!}}}}
\nc{\Mn}{\operatorname{\mathcal{M}{\rm{ono}\!}}}
\nc{\Ep}{\operatorname{\mathcal{E}{\rm{pi}\!}}}
\nc{\Rg}{\operatorname{\mathcal{R}{\rm{eg}\!}}}
\nc{\Ob}{\operatorname{Ob\!}}
\numberwithin{equation}{section}
\newtheorem{theo}{\ \ \ Theorem}[section]
\newtheorem{lem}[theo]{\ \ \ Lemma}
\newtheorem{prop}[theo]{\ \ \ Proposition}
\newtheorem{definition}[theo]{\ \ \ Definition}
\theoremstyle{definition}
\newtheorem{exmp}[theo]{\ \ \ Example}
\theoremstyle{remark}
\newtheorem{rem}[theo]{\ \ \ Remark}
\begin{document}

\title[]
{On the implication $T_{0} \Rightarrow T_{3 \frac{1}{2}}$ for some
topological protomodular algebras}

\author{Dali Zangurashvili}

\maketitle

\begin{abstract}

 The notion of a right-cancellable protomodular algebra is introduced. It
is proved that a right-cancellable topological protomodular algebra
that satisfies the separation axiom $T_{0}$ is completely regular.
\bigskip

\noindent{\bf Key words and phrases}:  topological protomodular
algebra; separation axioms; completely regular space; uniformity.

\noindent{\bf 2000  Mathematics Subject Classification}: 22A30,
22A26, 08B05, 18C10, 54H12.
\end{abstract}

\section{Introduction}

One of the classical results of topological group theory asserts
that any $T_{0}$ group is completely regular (and hence Hausdorff).
There naturally arises the question whether this statement remains
valid if we replace "group" by an algebra from any variety of
universal algebras. A part of this assertion related to the
Hausdorff condition (i.e., that $T_{0}$ implies $T_{2}$) was studied
first by W. Taylor \cite{T}, who proved that this is the case if a
variety is congruence permutable. Subsequently, this result was step
by step improved in the papers
\cite{G},\cite{C},\cite{C1},\cite{KS}. To the best of our knowledge,
the strongest result obtained in this direction asserts that any
$T_{0}$ algebra is Hausdorff if a variety is $k$-permutable and
congruence modular (K. Kearnes, L. Sequeira \cite{KS}).

The topological algebras for another class of varieties were
considered by P. T. Johnstone and M. C. Pedicchio \cite{JP}. Namely,
they studied the category of topological Mal'cev algebras, and, in
particular, proved the implication $T_{0}\Rightarrow T_{2}$ for
them. They also proved that if, in addition, Mal'cev term on an
algebra is weakly associative in the sense of \cite{JP}, then this
algebra is regular. The issue whether the separation axiom $T_{1}$
implies regularity was further studied by F. Borceux and M. M.
Clementino in \cite{BC} and \cite{BC1}, where they generalized many
properties of topological groups to the case of protomodular
algebras, and, in particular, proved that any $T_{1}$ protomodular
algebra is regular. Note that the notion of a protomodular variety
is relatively recent and derived from the notion of a protomodular
category introduced by D. Bourn as an abstract setting in which many
properties of groups remain valid \cite{B}. Note further that there
is the purely syntactical characterization of a protomodular variety
due to D. Bourn and G. Janelidze \cite{BJ}. It requires the
existence of one operation $\theta$ (called a
protomodular/semi-abelian operation) of arbitrarily high arity
$(n+1)$, together with some binary operations $\alpha_{i}$
 and constants $e_{i}$ that satisfy certain identities.


The aim of this paper is to establish for which protomodular
algebras the separation axiom $T_{0}$ implies complete regularity.
One relevant sufficient condition is immediate: this is the case if
the algebraic theory of a variety has a group reduct. For this,
there are two criteria: one criterion requires the existence of an
associative Mal'cev operation (Johnstone-Pedicchio \cite{JP}), and
the other requires the existence of a 2-associative semi-abelian
operation \cite{Z}.



Another approach to the problem in question is to attempt to
generalize the well-known result, asserting that any topology on a
group that satisfies the separation axiom $T_{0}$ is determined by a
uniformity, to the case where "group" is replaced by an algebra from
a protomodular variety. Following this approach we distinguish a
certain subvariety. It is determined by the identities

\begin{equation}
\alpha_{i}(\theta(a_{1},a_{2},...,a_{n},b),\theta(a'_{1},a'_{2},...,a'_{n},b))=
\end{equation}
$$=\alpha_{i}(\theta(a_{1},a_{2},...,a_{n},b'),\theta(a'_{1},a'_{2},...,a'_{n},b')),$$

\noindent $1\leq i\leq n$. We call the algebras satisfying these
identities \emph{right-cancellable}. We prove:\vskip+2mm

\emph{A right-cancellable topological protomodular algebra that
satisfies the separation axiom $T_{0}$ is completely regular.}
\vskip+5mm

The results of this paper were announced in \cite{Z1} without
proofs.




\section{Preliminaries}









For the definition of a protomodular category we refer the reader to
the paper \cite{B} by D. Bourn.

Let $\mathbb{V}$ be a variety of universal algebras of a type $\cF$.

\begin{theo} (Bourn-Janelidze \cite{BJ})  $\mathbb{V}$ is protomodular if and
only if its algebraic theory contains, for some natural $n$,
constants $e_{1}, e_{2},...,e_{n}$, binary operations $\alpha_{1}$,
$\alpha_{2}$,..., $\alpha_{n}$ and an $(n+1)$-ary operation $\theta$
such that the following identities are satisfied:

\begin{equation}
\alpha_{i}(a,a)=e_{i};
\end{equation}

\begin{equation}
\theta(\alpha_{1}(a,b),\alpha_{2}(a,b),...,\alpha_{n}(a,b),b)=a.
\end{equation}

\end{theo} \vskip+3mm

For simplicity, algebras from a protomodular variety are called
protomodular. The operation $\theta$ satisfying (2.2) for some
$\alpha_{i}$ and $e_{i}$ which in their turn satisfy (2.1) is called
protomodular. A protomodular operation is called semi-abelian if all
$e_{i}$'s are equal to one another.

The motivating example of a protomodular variety is given by the
variety of groups. More generally, any variety whose algebraic
theory contains a group operation is protomodular (for this reason
the variety of Boolean algebras is protomodular). In that case we
have:

\begin{equation}\theta(a,b)=ab,
\end{equation}

\begin{equation}
\alpha(a,b)=a/b,
\end{equation}

\noindent and $e$ is the unit of the group. The other examples of
protomodular varieties are given by the varieties of left/right
semi-loops, loops, locally Boolean distributive lattices \cite{BC},
Heyting algebras, Heyting semi-lattices \cite{J}. Observe that the
operations (2.3) and (2.4) serve as operations from the
Bourn-Janelidze criterion in the case of left semi-loops and of
loops too.\vskip+2mm


The identities (2.1) and (2.2) immediately imply
\cite{BC1}:\vskip+2mm

(a) if $\alpha_{i}(a,c)=\alpha_{i}(b,c)$, for all $i$ ($1\leq i\leq
n$), then $a=b$;\vskip+2mm

(b) if $\alpha_{i}(a,b)=e_{i}$, for all $i$ ($1\leq i\leq n$), then
$a=b$;\vskip+2mm

(c) $\theta(e_{1},e_{2},...,e_{n},a)=a.$\vskip+2mm



Let $A$ be a set and let $\theta$ be an $(n+1)$-ary operation on
$A$. The two notions of associativity for $\theta$ are introduced in
\cite{Z}. 1-Associativity is the straightforward
       generalization of the usual associativity condition given by
       the move of parentheses.
 $\theta$ is called 2-associative \cite{Z} if, for any
$a_{1},a_{2},...,a_{n}, b_{1},b_{2},...,b_{n},c\in A$, one has
\begin{equation}
\theta(a_{1},a_{2},...,a_{n},\theta(b_{1},b_{2},...,b_{n},c))=
\end{equation}
$$=\theta(\theta(a_{1},a_{2},...,a_{n},b_{1}),\theta(a_{1},a_{2},...,a_{n},b_{2}),...,\theta(a_{1},a_{2},...,a_{n},b_{n}),c).$$\vskip+2mm


Throughout the paper $\mathbb{V}_{n}$ denotes the simplest
protomodular variety, i.e. the variety with the signature
$\mathfrak{F}_{n}$ containing only one $(n+1)$-ary operation symbol
$\theta$, the binary operation symbols
$\alpha_{1},\alpha_{2},...,\alpha_{n}$, and the constant symbols
$e_{1},e_{2},...,e_{n}$,
           where the identities are (2.1) and (2.2).\vskip+2mm

Let now $\mathbb{V}$ be any variety of universal algebras, and let
$A$ be an algebra from $\mathbb{V}$. $A$ is called a topological
algebra if $A$ is equipped with a topology such that all operations
from $\mathcal{F}$ are continuous.

\begin{prop} (Borceux-Clementino \cite{BC1}).
Let $a$ be an element of a topological protomodular algebra $A$.
Then the subsets

\begin{equation}
\bigcap_{i=1}^{n}\alpha_{i}(-,a)^{-1}(H_{i}),
\end{equation}

\noindent with  $H_{i}$ being an open neighborhood of $e_{i}$,
constitute a base of neighborhoods of $a$.
\end{prop}


We refer the reader to \cite{E} for the needed definitions and facts
from the uniform spaces theory. Here we only recall that a
uniformity on a set $X$ is a family of binary relations on $X$, that
satisfies certain conditions. Any uniformity $\mathrm{U}$ determines
a topology on $X$ as follows: $O$ is open if and only if, for any
$x\in O$, there exists $R\in \mathrm{U}$ such that $B(x,R)\subset
O$; here $B(x,R)$ denotes the set $\{y|(x,y)\in R\}$. Moreover, if a
family $\mathbf{C}$ of coverings of a set $X$ satisfies the
conditions (C1)-(C4) below, then the family of sets $(\cup_{H\in
\mathcal{A}}H\times H)_{\mathcal{A}\in \mathbf{C}}$ is a base of a
uniformity on $X$.\vskip+2mm

(C1) If $\mathcal{A}\in \mathbf{C}$ and $\mathcal{A}$ is inscribed
in a covering  $\mathcal{B}$ (i.e., for any $A\in \mathcal{A}$ there
exists $B$ from $\mathcal{B}$, with $A\subset B$), then
$\mathcal{B}\in \mathbf{C}$;\vskip+2mm

(C2) for any $\mathcal{A}_{1},\mathcal{A}_{2}\in \mathbf{C}$, there
is $\mathcal{A}\in \mathbf{C}$, which is inscribed in both
$\mathcal{A}_{1}$ and $\mathcal{A}_{2}$;\vskip+2mm

(C3) for any $\mathcal{A}\in \mathbf{C}$, there exists
$\mathcal{B}\in \mathbf{C}$ which is strongly star-like inscribed in
$\mathcal{A}$ (i.e., for any $B\in \mathcal{B}$ there exists $A$
from $\mathcal{A}$, with $St(B,\mathcal{B})\subset A$);\vskip+2mm

(C4) for any distinct $x,y\in X$, there exists $\mathcal{A}\in
\mathbf{C}$ such that, for any $A\in \mathcal{A}$ we have
$\{x,y\}\nsubseteq \mathcal{A}$.\vskip+2mm

For a set $M$ of $X$ and a covering $\mathcal{A}$ of $X$, the symbol
$St(M, \mathcal{A})$ denotes the set $\bigcup_{A\in \mathcal{A},
A\bigcap M\neq \emptyset}A$.\vskip+3mm

\section{Right-Cancellable Protomodular Algebras}

\begin{lem}Let $\mathbb{V}$ be a protomodular variety, and let $A$ be a $\mathbb{V}$-algebra. The conditions (i)-(iv) given below are equivalent and imply
 the condition (v).
If $A$ is associative in any sense of \cite{Z}, then all these
conditions are equivalent.\vskip+2mm

{\rm(i)} For any
$a_{1},a_{2},...,a_{n},a'_{1},a'_{2},...,a'_{n},b,b'\in A$ and $i$
($1\leq i\leq n)$, we have

\begin{equation}
\alpha_{i}(\theta(a_{1},a_{2},...,a_{n},b),\theta(a'_{1},a'_{2},...,a'_{n},b))=
\end{equation}
$$=\alpha_{i}(\theta(a_{1},a_{2},...,a_{n},b'),\theta(a'_{1},a'_{2},...,a'_{n},b'));$$

{\rm(ii)} for any
$a_{1},a_{2},...,a_{n},a'_{1},a'_{2},...,a'_{n},b,b'\in A$ and $i$
($1\leq i\leq n)$, we have

\begin{equation}
\alpha_{i}(\theta(a_{1},a_{2},...,a_{n},\theta{(a'_{1},a'_{2},...,a'_{n},b})),\theta(a''_{1},a''_{2},...,a''_{n},b))=
\end{equation}

$$=\alpha_{i}(\theta(a_{1},a_{2},...,a_{n},\theta{(a'_{1},a'_{2},...,a'_{n},b'})),\theta(a''_{1},a''_{2},...,a''_{n},b'));$$
\vskip+2mm

{\rm(iii)} for any
$a_{1},a_{2},...,a_{n},a'_{1},a'_{2},...,a'_{n},b,b'\in A$ and $i$
($1\leq i\leq n)$, we have

\begin{equation}
\alpha_{i}(\theta(a_{1},a_{2},...,a_{n},b),\theta(a'_{1},a'_{2},...,a'_{n},\theta(a''_{1},a''_{2},...,a''_{n},b))=
\end{equation}
$$=\alpha_{i}(\theta(a_{1},a_{2},...,a_{n},b'),\theta(a'_{1},a'_{2},...,a'_{n},\theta(a''_{1},a''_{2},...,a''_{n},b')));$$
 \vskip+2mm

{\rm(iv)} for any $i$ ($1\leq i\leq n)$, there is a term $T_{i}$ of
$3n$ variables over the signature of $\mathbb{V}$, such that for any
$a_{1},a_{2},...,a_{n}, a'_{1},a'_{2},...,a'_{n},
a''_{1},a''_{2},...,a''_{n}$, $b, b'\in A,$ if

\begin{equation}
\theta(a_{1},a_{2},...,a_{n},b)=\theta(a'_{1},a'_{2},...,a'_{n},
b'),
\end{equation}

\noindent then

\begin{equation}\alpha_{i}(\theta(a''_{1},a''_{2},...,a''_{n},b'),b)=T_{i}(a_{1},a_{2},...,a_{n},a'_{1},a'_{2},...,a'_{n},a''_{1},a''_{2},...,a''_{n});
\end{equation}\vskip+3mm



















\vskip+3mm

{\rm(v)} for any $a_{1},a_{2},...,a_{n},b,b'\in A$, we have

\begin{equation}
\alpha_{i}(\theta(a_{1},a_{2},...,a_{n},b),b)=
\end{equation}
$$=\alpha_{i}(\theta(a_{1},a_{2},...,a_{n},b'),b')$$




\noindent and

\begin{equation}
\alpha_{i}(b,\theta(a_{1},a_{2},...,a_{n},b))=
\end{equation}
$$=\alpha_{i}(b',\theta(a_{1},a_{2},...,a_{n},b')).$$








\end{lem}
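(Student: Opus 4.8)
The plan is to prove the equivalences by exhibiting a cycle of implications among (i)--(iv), each obtained by judicious substitution into the protomodular identities (2.1)--(2.2), and then to derive (v) from (i). The central computational tool is identity (2.2), which lets me rewrite any element $a$ as $\theta(\alpha_1(a,b),\dots,\alpha_n(a,b),b)$ for a chosen base point $b$; this is the protomodular analogue of writing a group element as a product. First I would show (i)$\Rightarrow$(ii) and (i)$\Rightarrow$(iii): these are genuinely just instances of (i) after a relabeling. For (ii), I treat $\theta(a_1,\dots,a_n,\theta(a'_1,\dots,a'_n,b))$ as a single ``left factor'' applied to the base point $b$ and $\theta(a''_1,\dots,a''_n,b)$ as another, so that the claim is exactly the statement that the pair of $\alpha_i$-values is independent of $b$ versus $b'$ --- which is what (i) asserts. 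The statement (iii) is handled symmetrically, pushing the nested $\theta$ into the second argument. Conversely, (ii)$\Rightarrow$(i) and (iii)$\Rightarrow$(i) should follow by specializing the innermost $\theta$ to collapse via identity (c), i.e. $\theta(e_1,\dots,e_n,a)=a$, choosing the $a'_j$ or $a''_j$ to be the relevant constants $e_j$.

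The more delicate link is the equivalence with (iv), which is the ``term-existence'' reformulation. To get (iv) from (i), I would start from the hypothesis $\theta(a_1,\dots,a_n,b)=\theta(a'_1,\dots,a'_n,b')$ and apply $\alpha_i(\theta(a''_1,\dots,a''_n,-),-)$-type manipulations, using (i) to transport the dependence on $b,b'$ onto a fixed base point (say the constants $e_j$), at which moment the right-hand side of (3.5) becomes a closed term $T_i$ in the displayed variables only. The key insight is that once the equation (3.4) holds, identity (i) forces the quantity $\alpha_i(\theta(a''_1,\dots,a''_n,b'),b)$ to be computable from the $a,a',a''$ alone, because the ``$b$-dependence'' is exactly what (i) says is inessential. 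For the reverse implication (iv)$\Rightarrow$(i), I would feed into (iv) the two specific equalities of the form (3.4) that arise when the left-hand sides of (i) are set equal to a common value, and read off that the resulting $T_i$ forces the two sides of (3.1) to coincide.

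Finally, (i)$\Rightarrow$(v): both displayed equations in (v) are special cases of (i). For (3.6) I set $a'_j=e_j$ for all $j$ so that, by identity (c), $\theta(a'_1,\dots,a'_n,b)=b$ and $\theta(a'_1,\dots,a'_n,b')=b'$, and (i) collapses precisely to (3.6); the equation (3.7) is obtained the same way with the roles of the two $\theta$-arguments interchanged, taking the $a_j=e_j$. The last sentence of the lemma, that associativity (in either sense of \cite{Z}) upgrades (v) back to the others, is where I expect the real obstacle. Here (v) is weaker because it fixes the two left-hand tuples to be essentially trivial, so recovering the full (i) requires using $2$-associativity (3.3) to factor a general $\theta(a_1,\dots,a_n,\theta(a'_1,\dots,a'_n,b))$ through a single application of $\theta$ to $b$, thereby reducing the general case of (i) to the special form controlled by (v). The hard part will be verifying that the associativity identity does indeed let one commute the $\alpha_i$ past the nested $\theta$ cleanly, without introducing extra dependence on $b$ that (v) alone cannot absorb; I would carry this out by expanding both sides via (3.3) and matching terms, being careful that $1$-associativity and $2$-associativity each require a slightly different bracketing argument.
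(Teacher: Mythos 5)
Your plan breaks at its very first step, and the failure propagates through the rest. The claim that (i) $\Rightarrow$ (ii) and (i) $\Rightarrow$ (iii) are ``genuinely just instances of (i) after a relabeling'' is not correct: condition (i) quantifies only over pairs of expressions $\theta(c_1,\dots,c_n,b)$, $\theta(c'_1,\dots,c'_n,b)$ whose first $n$ arguments are fixed elements of $A$ and which share the \emph{same} last argument $b$. In (ii) the two arguments of $\alpha_i$ are $\theta(a_1,\dots,a_n,\theta(a'_1,\dots,a'_n,b))$ and $\theta(a''_1,\dots,a''_n,b)$; their base points are $\theta(a'_1,\dots,a'_n,b)$ and $b$ respectively, so (i) does not apply to them as written. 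Treating the nested expression as ``a single left factor applied to the base point $b$'' amounts to rewriting it in the form $\theta(c_1,\dots,c_n,b)$ with $c_j$ independent of $b$ --- which is exactly 2-associativity, an assumption \emph{not} available in this half of the lemma. The paper's proof of (i) $\Rightarrow$ (ii) needs a genuine extra idea, a change of base point: put $c_i=\alpha_i(\theta(a''_1,\dots,a''_n,b),\theta(a'_1,\dots,a'_n,b))$, note that by (i) this is independent of $b$, and use (2.2) to rewrite $\theta(a''_1,\dots,a''_n,b)=\theta(c_1,\dots,c_n,\theta(a'_1,\dots,a'_n,b))$; only then do both arguments of $\alpha_i$ share the common base point $\theta(a'_1,\dots,a'_n,b)$, so that (i) can be applied with that base point varied. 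The same gap sinks your direct (i) $\Rightarrow$ (iv): after (3.4) is used to trade $b'$ for $b$, the element to be shown computable from the $a_j,a'_j,a''_j$ alone has the nested form $\alpha_i(\theta(t_1,\dots,t_n,\theta(a_1,\dots,a_n,b)),b)$, and its $b$-independence is an instance of (ii) (with the last block of variables set to $e_j$), not of (i); your stated ``key insight'' --- that (i) says the $b$-dependence is inessential --- is precisely the point at issue, so the argument is circular. This is why the paper derives (iv) from (ii) (and from (iii)) rather than from (i) directly.

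The last step, that associativity upgrades (v) to (i), is also not carried out: you name it as ``the real obstacle'' and propose to expand and match terms, but note that (i) contains no nested $\theta$, so there is nothing in it to factor by 2-associativity in the direction you describe. The working argument is again the change-of-base trick, now powered by (v): by (3.7), $c_j=\alpha_j(b,\theta(a'_1,\dots,a'_n,b))$ is independent of $b$, and (2.2) gives $b=\theta(c_1,\dots,c_n,\theta(a'_1,\dots,a'_n,b))$, whence
\begin{equation*}
\alpha_i(\theta(a_1,\dots,a_n,b),X)
=\alpha_i\bigl(\theta(a_1,\dots,a_n,\theta(c_1,\dots,c_n,X)),X\bigr),
\quad X=\theta(a'_1,\dots,a'_n,b);
\end{equation*}
2-associativity rewrites the first argument as $\theta(\theta(a_1,\dots,a_n,c_1),\dots,\theta(a_1,\dots,a_n,c_n),X)$ with coefficients independent of $b$, and then (3.6) yields independence of $X$, hence of $b$. (A small slip besides: you cite (3.3) as the 2-associativity identity, but (3.3) is condition (iii); 2-associativity is (2.5).) On the positive side, your derivations of (ii) $\Rightarrow$ (i), (iii) $\Rightarrow$ (i) and (i) $\Rightarrow$ (v), by setting the appropriate block of variables equal to $e_1,\dots,e_n$ and invoking $\theta(e_1,\dots,e_n,a)=a$, are correct and coincide with the paper's, and your (iv) $\Rightarrow$ (i) idea is in the spirit of the paper's, which feeds the trivial instance $\theta(e_1,\dots,e_n,\theta(a'_1,\dots,a'_n,b))=\theta(a'_1,\dots,a'_n,b)$ of (3.4) into (3.5) so that $T_i$ certifies $b$-independence.
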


\vskip+5mm

\begin{proof}

(i) $\Rightarrow$ (ii): Let

$$c_{i}=\alpha_{i}(\theta(a''_{1},a''_{2},...,a''_{n},b),\theta(a'_{1},a'_{2},...,a'_{n},b)).$$

\noindent By (3.1) the value of $c_{i}$ does not depend on $b$. From
(2.2) we obtain

$$\alpha_{i}(\theta(a_{1},a_{2},...,a_{n},\theta(a'_{1},a'_{2},...,a'_{n},b)),\theta(a''_{1},a''_{2},...,a''_{n},b))=$$
$$=\alpha_{i}(\theta(a_{1},a_{2},...,a_{n},\theta(a'_{1},a'_{2},...,a'_{n},b)),\theta(c_{1},c_{2},...,c_{n},\theta(a'_{1},a'_{2},...,a'_{n},b))).$$\vskip+2mm


\noindent Again applying (3.1) we obtain the desired result. One can
prove similarly the implication (i) $\Rightarrow$ (iii).

(ii) $\Rightarrow$ (iv): First note that from (c) of Section 2 we
conclude that (ii) implies (i). Let us now assume that (3.4) is
satisfied. From (2.2) and (3.1) we have
$$\theta(a''_{1},a''_{2},...,a''_{n},b')=\theta(t_{1},t_{2},...,t_{n},\theta(a'_{1},a'_{2},...,a'_{n},b')),$$

\noindent where
$$t_{i}=\alpha_{i}(\theta(a_{1},a_{2},...,a_{n},e_{i}),\theta(a'_{1},a'_{2},...,a'_{n},e_{i})).$$

\noindent Applying (3.4) we obtain
$$\theta(a''_{1},a''_{2},...,a''_{n},b')=\theta(t_{1},t_{2},...,t_{n},\theta(a_{1},a_{2},...,a_{n},b)).$$

\noindent Hence
$$\alpha_{i}(\theta(a''_{1},a''_{2},...,a''_{n},b'),b)=\alpha_{i}(\theta(t_{1},t_{2},...,t_{n},\theta(a_{1},a_{2},...,a_{n},b)),b).$$
\noindent Then from (3.2) we have
$$\alpha_{i}(\theta(a''_{1},a''_{2},...,a''_{n},b'),b)=T_{i}(a_{1},a_{2},...,a_{n},
t_{1},t_{2},...,t_{n})$$

\noindent for the term
$T_{i}=\alpha_{i}(\theta(t_{1},t_{2},...,t_{n},\theta(a_{1},a_{2},...,a_{n},e_{i})),e_{i})$.

(iii) $\Rightarrow$ (iv): From (c) of Section 2 we obtain (3.7). Let
(3.4) be satisfied, and let

$$t_{i}=\alpha_{i}(b,\theta(a_{1},a_{2},...,a_{n},b)).$$

\noindent From (3.7) we have

$$t_{i}=\alpha_{i}(e_{i},\theta(a_{1},a_{2},...,a_{n},e_{i})).$$

\noindent On the other hand, (3.4) and (3.7) imply that

$$t_{i}=\alpha_{i}(b,\theta(a'_{1},a'_{2},...,a'_{n},b')),$$

\noindent and hence, from (2.2) we obtain

$$b=\theta(t_{1},t_{2},...,t_{n},\theta(a'_{1},a'_{2},...,a'_{n},b')).$$



\noindent Then, taking into account (3.3), we obtain

$$\alpha_{i}(\theta(a''_{1},a''_{2},...,a''_{n},b'),b)=$$
$$=\alpha_{i}(\theta(a''_{1},a''_{2},...,a''_{n},b'),\theta(t_{1},t_{2},...,t_{n},\theta(a'_{1},a'_{2},...,a'_{n},b'))=$$
$$=T_{i}(a_{1},a_{2},...,a_{n},a'_{1},a'_{2},...,a'_{n},a''_{1},a''_{2},...,a''_{n})$$

\noindent for the term
$T_{i}=\alpha_{i}(\theta(a''_{1},a''_{2},...,a''_{n},e_{i}),\theta(t_{1},t_{2},...,t_{n},\theta(a'_{1},a'_{2},...,a'_{n},e_{i}))).$

(iv) $\Rightarrow$ (i): From (c) of Section 2 we have
$$\theta(e_{1},e_{2},...,e_{n},\theta(a'_{1},a'_{2},...,a'_{n},b))=\theta(a'_{1},a'_{2},...,a'_{n},b).$$

\noindent It implies (3.1).

The implication (i) $\Rightarrow$ (v) follows from (c) of Section 2.

(v)  $\Rightarrow$ (i): Let $A$ be associative, and let

$$c_{i}=\alpha_{i}(b,\theta(a'_{1},a'_{2},...,a'_{n},b)).$$

\noindent By (3.7) the value of $c_{i}$ does not depend on $b$. From
(2.2) we have

$$\alpha_{i}(\theta(a_{1},a_{2},...,a_{n},b),\theta(a'_{1},a'_{2},...,a'_{n},b))=$$
$$=\alpha_{i}(\theta(a_{1},a_{2},...,a_{n},\theta(c_{1},c_{2},...,c_{n},\theta(a'_{1},a'_{2},...,a'_{n},b)),\theta(a'_{1},a'_{2},...,a'_{n},b)).$$\vskip+2mm


\noindent Applying the associativity and then (3.6) we obtain the
required equality.
\end{proof}

\begin{definition}
We call a protomodular algebra right-cancellable if it satisfies the
equivalent conditions of Lemma 3.1.
\end{definition}

\begin{rem}(a) Let $n=1$. Below we use the traditional abbreviation $ab$
for $\theta(a,b)$, and $a/b$ for $\alpha(a,b)$.

Let $A$ be an algebra from this variety, and
 the following equivalent conditions be satisfied\footnote{The implication (3.9) $\Rightarrow$(3.8) is obvious. For the converse, observe that
 from (2.2) we have $(a/e)e=a$; on the other hand, (3.8) implies $(a/e)e=a/e.$ The conditions (3.8) and (3.9) are satisfied if, for
instance, $A$ is commutative.}:

\begin{equation}
ae=a,
\end{equation}
\begin{equation}
a/e=a
\end{equation}

\noindent for any $a\in A$. Then the condition $(i)$ implies the
associativity. Indeed, from (3.8) we obtain the identity
\begin{equation}
(ab)/(a'b)=a/a'.
\end{equation}

\noindent It implies

$$(ab)c/(bc)=(ab)/b=ab/eb=a/e=a,$$
\vskip+1mm

\noindent for any $a, b, c\in A$.  Multiplying both parts of this
equality by $(bc)$ from the right, we obtain the associativity
condition. Then, taking into account the fact that any set equipped
with an associative binary operation, which has a left identity and
left inverses, is a group (see, for instance, \cite{F}), we obtain
that $A$ is a group.

Note that (3.1) implies (3.8) and (3.9) in the case of the variety
of left semi-loops. Indeed, we have

$$a/e=aa/ea=aa/a=a.$$
\vskip+1mm

\noindent Thus we can conclude that any right-cancellable left
semi-loop (loop) is a group.



(b)  One can show that non-trivial Boolean algebras, locally
 Boolean distributive lattices, Heyting algebras, Heyting
semi-lattices with the protomodular operations given in \cite{BC1}
and \cite{J} are not right-cancellable.

\end{rem}

The examples of non-trivial right-cancellable algebras are given in
the next section.

\section{Right-Cancellable Topological Protomodular Algebras}

Throughout this section, unless specified otherwise, we assume that
$\mathbb{V}$ is a protomodular variety of universal algebras (of a
type $\cF$).

\begin{lem}
For any topological $\mathbb{V}$-algebra, the separation axiom
$T_{0}$ implies $T_{1}$.
\end{lem}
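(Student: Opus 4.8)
The plan is to analyze the specialization preorder of the topology and to show that under $T_{0}$ it must be symmetric, hence trivial. Recall that for a space $A$ one writes $x\leq y$ when $x\in\ol{\{y\}}$, equivalently when every neighborhood of $x$ contains $y$; this is always a preorder, the axiom $T_{0}$ is exactly its antisymmetry, and $T_{1}$ is exactly the assertion that it is trivial (every singleton closed). Thus it suffices, assuming $T_{0}$, to prove that $\leq$ is symmetric: then $x\leq y$ forces $x=y$, which is $T_{1}$.

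First I would describe $\leq$ in algebraic terms using the Borceux--Clementino neighborhood base (Proposition 2.2). Since the sets $\bigcap_{i=1}^{n}\alpha_{i}(-,x)^{-1}(H_{i})$, with $H_{i}$ a neighborhood of $e_{i}$, form a base at $x$, the point $y$ lies in every neighborhood of $x$ precisely when $\alpha_{i}(y,x)$ lies in every neighborhood of $e_{i}$ for each $i$. Hence
$$x\leq y\iff e_{i}\leq\alpha_{i}(y,x)\ \text{for all }i.$$
Two standard facts about the specialization preorder will be used: continuous maps are monotone for it (from $f(\ol{\{y\}})\subset\ol{\{f(y)\}}$), and on the finite power $A^{n}$ it is the coordinatewise preorder (from $\ol{\{(y_{1},\dots,y_{n})\}}=\prod_{i}\ol{\{y_{i}\}}$).

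The heart of the argument is then a short computation. Assume $x\leq y$ and put $u_{i}=\alpha_{i}(y,x)$, so that $e_{i}\leq u_{i}$ for all $i$ and, by identity (2.2), $\theta(u_{1},\dots,u_{n},x)=y$. Consider the continuous maps $g_{i}(t_{1},\dots,t_{n})=\alpha_{i}(x,\theta(t_{1},\dots,t_{n},x))$. Using property (c) of Section 2 together with (2.1) one gets $g_{i}(e_{1},\dots,e_{n})=\alpha_{i}(x,x)=e_{i}$, while $g_{i}(u_{1},\dots,u_{n})=\alpha_{i}(x,y)$. Since $(e_{1},\dots,e_{n})\leq(u_{1},\dots,u_{n})$ coordinatewise and $g_{i}$ is continuous, monotonicity yields $e_{i}\leq\alpha_{i}(x,y)$ for every $i$, which by the displayed characterization (with the roles of $x$ and $y$ exchanged) says exactly $y\leq x$.

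I expect the main obstacle to be conceptual rather than computational: recognizing that the natural target is symmetry of the specialization preorder, and that Proposition 2.2 lets one transport the relation $e_{i}\leq\alpha_{i}(y,x)$ back to $e_{i}\leq\alpha_{i}(x,y)$ through the reconstruction identity (2.2) via the auxiliary maps $g_{i}$. Once these maps are found, the remaining verifications are routine. It is worth noting that this argument uses neither associativity nor right-cancellability, in agreement with the generality of the statement.
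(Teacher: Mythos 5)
Your proof is correct, and it takes a genuinely different route from the paper's. The paper argues pointwise: given (by $T_{0}$) a basic neighborhood $U=\bigcap_{i}\alpha_{i}(-,a)^{-1}(H_{i})$ of $a$ avoiding $b$, it picks $j$ with $\alpha_{j}(b,a)\notin H_{j}$, then uses continuity of $\alpha_{j}(b,-)$ at $b$ together with $\alpha_{j}(b,b)=e_{j}$ to produce a neighborhood $H$ of $b$ with $\alpha_{j}(b,H)\subset H_{j}$, which consequently cannot contain $a$. In the language of your proof, that is precisely the contrapositive of symmetry of the specialization preorder, so both arguments ultimately establish the same fact, but the implementations differ substantially. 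The paper's version is more economical: it needs only Proposition 2.2, continuity of $\alpha_{j}$ in its second variable, and identity (2.1); it never touches $\theta$, identity (2.2), or property (c). Your version routes everything through the specialization-preorder formalism (monotonicity of continuous maps, coordinatewise order on $A^{n}$) and needs (2.2) and (c) to build the comparison maps $g_{i}(t_{1},\dots,t_{n})=\alpha_{i}(x,\theta(t_{1},\dots,t_{n},x))$. What this buys is a cleaner structural statement --- in any topological $\mathbb{V}$-algebra the specialization preorder is symmetric, so $T_{0}\Rightarrow T_{1}$ falls out as antisymmetry plus symmetry --- together with your explicit (and correct) observation that neither associativity nor right-cancellability plays any role, which matches the generality of the lemma. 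The cost is heavier machinery and a slightly longer chain of standard facts to verify; the paper's direct neighborhood manipulation is shorter and self-contained.
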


\begin{proof} 

Let  $A$ be a $\mathbb{V}$-algebra, and let $a,b\in A$. Let there be
a neighbourhood $U$ of $a$ that does not contain $b$. Without loss
of generality one can assume that $U$ is given by (2.3) for some
base $(H_{j})_{1\leq j\leq n}$ of neighbourhoods of $(e_{j})_{1\leq
j\leq n}$. Since $b\overline{\in} U$, there is $j$ such that
$$\alpha_{j}(b,a)\overline{\in} H_{j}.$$

\noindent Since $\alpha_{j}(b;-)$ is continious and
$\alpha_{j}(b,b)=e_{j}$, there is a neighbourhood $H$ of $b$ such
that $\alpha_{j}(b,H)\subset H_{j}$. This implies that
$a\overline{\in} H$.
\end{proof}
\vskip+2mm

\begin{theo} Let $A$ be a right-cancellable $\mathbb{V}$-algebra.
Then any topology on $A$ that satisfies the separation axiom $T_{0}$
is completely regular.
\end{theo}

We will prove this statement in two steps. Let $A$ satisfy the
conditions (i)-(iv) of Lemma 3.1.

\begin{lem} Let $A$ be a
$T_{0}$ $\mathbb{V}$-algebra. Let $\mathcal{B}_{i}$ be any base of
neighbourhoods of $e_{i}$, and let $H_{i}\in {B_{i}}$ $(1\leq i\leq
n)$. Let $H=(H_{1}, H_{2},..., H_{n})$. Consider the covering

$$\mathcal{C}_{H}=(\bigcap_{i=1}^{n}\alpha_{i}^{-1}(-,a))(H_{i}))_{a\in A}$$

\noindent of $A$. Let $\mathbf{C}$ be the family of all coverings of
$A$ in which the coverings $\mathcal{C}_{H}$ are inscribed. Then the
family $\mathbf{C}$ determines a uniformity on $A$.
\end{lem}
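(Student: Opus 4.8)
The plan is to verify the four conditions (C1)--(C4) for the family $\mathbf{C}$, since the preliminaries tell us that once (C1)--(C4) hold, the sets $(\cup_{H\in\mathcal{A}}H\times H)_{\mathcal{A}\in\mathbf{C}}$ form a base of a uniformity on $A$. Condition (C1) is essentially built into the definition: $\mathbf{C}$ is defined as the family of all coverings in which some $\mathcal{C}_{H}$ is inscribed, and inscription is transitive, so if $\mathcal{A}\in\mathbf{C}$ and $\mathcal{A}$ is inscribed in $\mathcal{B}$, then the $\mathcal{C}_{H}$ inscribed in $\mathcal{A}$ is also inscribed in $\mathcal{B}$, giving $\mathcal{B}\in\mathbf{C}$. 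For (C2) I would show that the coverings $\mathcal{C}_{H}$ are themselves downward-directed under inscription: given $\mathcal{C}_{H}$ and $\mathcal{C}_{H'}$, use the fact that the $H_i$ form a base of neighborhoods of $e_i$ to find $H''_i\subset H_i\cap H'_i$, and then check that $\mathcal{C}_{H''}$ is inscribed in both. This reduces (C2) for $\mathbf{C}$ to a statement about the basic coverings.

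The crux of the argument is (C3), the strong star-refinement property, and this is where right-cancellability must enter. First I would compute the star $St(B,\mathcal{C}_H)$ of a basic piece $B=\bigcap_i\alpha_i(-,a)^{-1}(H_i)$. A point $x$ lies in this star iff there is some $a'$ with both $B\cap B'\neq\emptyset$ (where $B'=\bigcap_i\alpha_i(-,a')^{-1}(H_i)$) and $x\in B'$. The key move is to translate membership $x\in\bigcap_i\alpha_i(-,a')^{-1}(H_i)$ into a condition on $\theta$-expressions via the protomodular identity (2.2), which recovers $x$ from the $\alpha_i(x,a')$ and $a'$. The plan is to choose a refining base-covering $\mathcal{C}_{H'}$ with the $H'_i$ small enough that, after applying continuity of $\theta$ and the $\alpha_i$ at the relevant constants $e_i$, the whole star $St(B',\mathcal{C}_{H'})$ is forced into a single $\alpha_i(-,a)^{-1}(H_i)$. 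Right-cancellability—specifically condition (i), the identity (3.1) asserting that $\alpha_i(\theta(\bar a,b),\theta(\bar a',b))$ is independent of $b$—is precisely what makes the composite comparison of $\alpha_i$-values across the two centers $a,a'$ collapse to an expression not involving the "base point," so that the nested neighborhood estimate closes up. Equivalently I may use the term $T_i$ from condition (iv) to write the critical comparison explicitly and then invoke continuity of that single term at the tuple of $e_i$'s.

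Finally, for (C4) I would use the separation hypothesis: given distinct $x,y$, Lemma 4.1 upgrades $T_0$ to $T_1$, so there is a neighborhood of $x$ excluding $y$, and by Proposition 2.2 it may be taken of the form $\bigcap_i\alpha_i(-,x)^{-1}(H_i)$. Shrinking the $H_i$ within their base, I would select $\mathcal{C}_H$ so that no single member of the covering contains both $x$ and $y$; by the $T_1$ separation applied symmetrically (or by the explicit neighborhood base of Proposition 2.2 centered at each point), each basic piece that contains $x$ misses $y$ and vice versa, yielding (C4).

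The main obstacle I expect is (C3): carrying out the star computation requires unwinding a doubly-nested $\theta$-expression and checking that the $\alpha_i$-value of the outer comparison, after substituting the protomodular reconstruction (2.2), falls inside the prescribed $H_i$ once the refining neighborhoods are chosen small. The delicate point is that a priori this value depends on the center $a'$ of the intermediate piece, and only right-cancellability (identity (3.1), or equivalently the existence of the term $T_i$ of condition (iv)) removes that dependence; without it the star of a small set need not be controllable. So the heart of the proof is to make the correct small choices of $H'_i$ using joint continuity of $\theta$ and the $\alpha_i$ at the $e_i$, and then to apply (3.1)/(iv) at exactly the step where the two centers would otherwise fail to cancel.
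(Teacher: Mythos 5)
Your verification of (C1) and (C2) is fine, and for (C3) you have correctly identified the paper's own mechanism: compute the star of a basic piece, use the protomodular identity (2.2) to write the points of overlapping pieces as $\theta$-expressions, and invoke condition (iv) --- the term $T_i$ --- together with continuity of $T_i$ at the tuple of constants to force the star into a prescribed $\bigcap_i\alpha_i(-,a)^{-1}(H_i)$. Although this part is only a plan rather than an executed computation, it is the right plan and matches the paper's proof of (4.1).

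The genuine gap is in your treatment of (C4). You claim that after upgrading $T_0$ to $T_1$ (Lemma 4.1) and taking a basic neighborhood of $x$ excluding $y$, one can shrink the $H_i$ so that ``each basic piece that contains $x$ misses $y$.'' This does not follow from separation alone: the members of $\mathcal{C}_{H}$ are sets $\bigcap_i\alpha_i(-,b)^{-1}(H_i)$ centered at \emph{arbitrary} points $b\in A$, not at $x$ or $y$, so a piece centered at some third point $b$ could a priori contain both $x$ and $y$ no matter how small the $H_i$ are --- the conditions $\alpha_i(x,b)\in H_i$ and $\alpha_i(y,b)\in H_i$ say nothing directly about neighborhoods of $x$. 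This is exactly where right-cancellability must be used a second time, which your proposal reserves only for (C3). The paper's argument runs: choose $i$ with $\alpha_i(x,y)\neq e_i$ (property (b) of Section 2) and, by $T_1$, a base neighborhood $H_i$ with $\alpha_i(x,y)\notin H_i$; if both $x$ and $y$ lay in a piece centered at $b$, then writing $x=\theta(h_1,\dots,h_n,b)$ and $y=\theta(h'_1,\dots,h'_n,b)$ via (2.2), condition (i) of Lemma 3.1 gives
\begin{equation*}
\alpha_i(x,y)=\alpha_i\bigl(\theta(h_1,\dots,h_n,b),\theta(h'_1,\dots,h'_n,b)\bigr)=t_i(h_1,\dots,h_n,h'_1,\dots,h'_n),
\end{equation*}
an expression \emph{independent of} $b$; continuity of $t_i$ at $(e_1,\dots,e_n,e_1,\dots,e_n)$, where it takes the value $e_i$, then forces $\alpha_i(x,y)\in H_i$ for suitably shrunk neighborhoods, a contradiction. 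Without the identity (3.1) this dependence on the unknown center $b$ cannot be removed, so your (C4) step, as written, would fail.
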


\begin{proof}
Let us show that $\mathcal{C}_{H}$ satisfies the conditions
(C1)-(C4).

The validity of (C1) is obvious. The condition (C2) easily follows
from the fact that for any $H'_{i},H''_{i}\in \mathcal{B}_{i}$ there
exists $H_{i}\in \mathcal{B}$ with $H_{i}\subset H'_{i}\bigcap
H''_{i}$.

To prove that the condition (C3) is satisfied, it is sufficient to
show that, for any $H=(H_{1}, H_{2},..., H_{n})\in
\mathcal{B}_{1}\times \mathcal{B}_{2}\times ...\mathcal{B}_{n} $,
there exists $H'=(H'_{1}, H'_{2},...,H'_{n})\in
\mathcal{B}_{1}\times \mathcal{B}_{2}\times ...\mathcal{B}_{n}$ such
that

\begin{equation}
St(\bigcap_{i=1}^{n}\alpha_{i}^{-1}(-,a)(H'_{i}),
\mathcal{C}_{H'})\subset
\bigcap_{i=1}^{n}\alpha_{i}^{-1}(-,a))(H_{i})
\end{equation}

\noindent for any $a\in A$. From the implication (3.4)
$\Rightarrow$(3.5) it follows that
$$T_{i}(e_{1},e_{2},...,e_{n},e_{1},e_{2},...,e_{n},e_{1},e_{2},...,e_{n})=e_{i}$$
\noindent for any $i (1\leq i\leq n)$, and since the mappings
$T_{i}:A^{3n}\rightarrow A$ are continuous, there exist $H_{11}^{i},
H_{12}^{i},H_{13}^{i}\in \mathcal{B}_{1},H_{21}^{i}, H_{22}^{i},
H_{23}^{i}\in \mathcal{B}_{2},...,H_{n1}^{i}, H_{n2}^{i},H_{n3}^{i}$
$\in \mathcal{B}_{n}$ with

\begin{equation}
T_{i}(H^{i}_{11},H^{i}_{21},...H^{i}_{n1},H^{i}_{12},H^{i}_{22},...,H^{i}_{n2},H^{i}_{13},H^{i}_{23},...,H^{i}_{n3})\subset
H_{i}.
\end{equation}

Let

\begin{equation}
H'_{j}=\bigcap_{i=1}^{n}(H^{i}_{j1}\bigcap H^{i}_{j2}\bigcap
H^{i}_{j3}),
\end{equation}

\noindent for any $j$ $(1\leq j\leq n)$. If

$$\bigcap^{n}_{i=1}(\alpha_{i}^{-1}(-,a))(H'_{i})\bigcap \bigcap^{n}_{i=1}(\alpha_{i}^{-1}(-,a'))(H'_{i}))\neq \emptyset,$$

\noindent for some $a'$ from $A$, then there exists $b$ such that
$$\alpha_{i}(b,a)=h_{i}\in H'_{i}$$

\noindent and

$$\alpha_{i}(b,a')=h'_{i}\in H'_{i},$$

\noindent for all $i$ $(1\leq i\leq n)$. We have

$$b=\theta(\alpha_{1}(b,a),\alpha_{2}(b,a),...,\alpha_{n}(b,a),a)=\theta(h_{1},h_{2},...,h_{n},
a)$$

\noindent and

$$b=\theta(\alpha_{1}(b,a'),\alpha_{2}(b,a'),...,\alpha_{n}(b,a'),a')=\theta(h'_{1},h'_{2},...,h'_{n},
a').$$

\noindent Hence
\begin{equation}
\theta(h_{1},h_{2},...,h_{n},a)=\theta(h'_{1},h'_{2},...,h'_{n},a').
\end{equation}

\noindent Let
$$c\in \bigcap^{n}_{i=1}\alpha_{i}^{-1}(-,a')(H'_{i}).$$

\noindent Then $\alpha_{i}(c,a')=h''_{i}\in H'_{i}$ for all $i$
$(1\leq i\leq n)$. We have

$$\alpha_{i}(c,a)=\alpha_{i}(\theta
(\alpha_{1}(c,a'),\alpha_{2}(c,a'),..., \alpha_{n}(c,a'),a'),a)=$$
$$=\alpha_{i}(\theta (h''_{1},h''_{2},...,h''_{n},a'),a).$$

\noindent By the condition (iv) of Lemma 3.1, (4.4) implies that
$$\alpha_{i}(c,a)=T_{i}(h_{1},h_{2},...,h_{n},h'_{1},h'_{2},...,h'_{n},h''_{1}, h''_{2},...,h''_{n}).$$

\noindent From (4.2) it follows that $\alpha_{i}(c,a)\in H_{i}$, and
hence (4.1) is satisfied.

Let us now consider different points $a,a'\in A$. By (b) of Section
2,  there exists $i$ $ (1\leq i\leq n)$ such that

$$\alpha_{i}(a,a')\neq e_{i}.$$


\noindent Since $A$ is a $T_{0}$-space, by Lemma 4.1 there exists
$H_{i}\in \mathcal{B}_{i}$ such that
\begin{equation}
\alpha_{i}(a,a')\overline{\in} H_{i}.
\end{equation}

Let
$$t_{i}(a_{1},a_{2},...,a_{n},a'_{1},a'_{2},...,a'_{n})$$ denote the
left (or right)-hand part of (3.1). We obviously have
$$t_{i}(e_{1},e_{2},...,e_{n},e_{1},e_{2},...,e_{n})=e_{i}.$$
Since $t_{i}:A^{2n}\rightarrow A$ is continuous, there are
$$\widetilde{H}_{11},\widetilde{H}_{12}\in
\mathcal{B}_{1},\widetilde{H}_{21},\widetilde{H}_{22}\in
\mathcal{B}_{2},...,\widetilde{H}_{n1},\widetilde{H}_{n2}\in
\mathcal{B}_{n}
$$

\noindent with
\begin{equation}
t_{i}(\widetilde{H}_{11},\widetilde{H}_{21},...,\widetilde{H}_{n1},\widetilde{H}_{12},\widetilde{H}_{22},...,\widetilde{H}_{n2}
)\subset H_{i}.
\end{equation}

\noindent Let
\begin{equation}
\widetilde{H}_{j}=\widetilde{H}_{j1}^{i}\bigcap
\widetilde{H}_{j2}^{i}
\end{equation}

\noindent for any $j$ $(1\leq j\leq n)$, and let

$$\widetilde{H}=(\widetilde{H}_{1},\widetilde{H}_{2},...,\widetilde{H}_{n}).$$

\noindent Let us show that none of the elements of
$\mathcal{C}_{\widetilde{H}}$ contains both $a$ and $a'$. Indeed,
suppose that $$a,a'\in
\bigcap^{n}_{j=1}(\alpha_j^{-1}(-,b))(\widetilde{H}_{j}).$$

\noindent Then
$$\alpha_{j}(a,b)=h_{j}\in \widetilde{H}_{j}$$
\noindent and
$$\alpha_{j}(a',b)=h'_{j}\in \widetilde{H}_{j},$$
\noindent for all $j$'s. We have
$$a=\theta (\alpha_{1}(a,b),\alpha_{2}(a,b),...,
\alpha_{n}(a,b),b)=\theta (h_{1},h_{2},..., h_{n},b).$$

\noindent Similarly,
$$a'=\theta (h'_{1},h'_{2},..., h'_{n},b).$$

\noindent Then, by the condition (i) of Lemma 3.1, we have

$$\alpha_{i}(a,a')=\alpha_{i}(\theta(h_{1},h_{2},...,h_{n},b),\theta(h'_{1},h'_{2},...,h'_{n},b))=$$
$$=t_{i}(h_{1},h_{2},...,h_{n},h'_{1},h'_{2},...,h'_{n}).$$

\noindent From (4.6) and (4.7) it follows that

$$\alpha_{i}(a,a')\in H_{i},$$

\noindent but this contradicts (4.5).
\end{proof}

\begin{lem}
The topology $\tau'$ induced by the uniformity described in Lemma
4.3 coincides with the topology $\tau$ of $A$.
\end{lem}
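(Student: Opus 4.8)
The plan is to show that the two topologies $\tau$ and $\tau'$ have the same neighbourhood bases at each point, which for translation-type structures reduces to comparing bases at the points $a \in A$ directly. Recall that by construction the uniformity of Lemma 4.3 has, as a base of entourages, the sets $R_{\mathcal{C}_H} = \bigcup_{C \in \mathcal{C}_H} C \times C$ where $\mathcal{C}_H = (\bigcap_{i=1}^n \alpha_i^{-1}(-,a)(H_i))_{a \in A}$, and that the $\tau'$-neighbourhoods of a point $a$ are the sets $B(a, R) = \{y \mid (a,y) \in R\}$. So first I would compute $B(a, R_{\mathcal{C}_H})$ explicitly: it is the union of all cells $\bigcap_i \alpha_i^{-1}(-,b)(H_i)$ of the covering $\mathcal{C}_H$ that happen to contain $a$, i.e. $St(a, \mathcal{C}_H)$.

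Second, I would fix a point $a$ and compare these $\tau'$-neighbourhoods with the canonical $\tau$-neighbourhood base furnished by Proposition 2.2, namely the sets $U_H(a) = \bigcap_{i=1}^n \alpha_i(-,a)^{-1}(H_i)$ with $H_i$ open around $e_i$. Note that the cell of $\mathcal{C}_H$ indexed by $b = a$ is exactly $U_H(a)$, and since $\alpha_i(a,a) = e_i \in H_i$ the point $a$ lies in this cell; hence $U_H(a) \subseteq B(a, R_{\mathcal{C}_H})$, giving at once the inclusion $\tau' \subseteq \tau$ (every $\tau'$-neighbourhood contains a $\tau$-neighbourhood). This direction is routine.

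The more delicate direction is $\tau \subseteq \tau'$: given a basic $\tau$-neighbourhood $U_H(a)$, I must produce a uniform entourage $R$ with $B(a,R) \subseteq U_H(a)$, equivalently a covering $\mathcal{C}_{H'}$ whose star $St(a, \mathcal{C}_{H'})$ is contained in $U_H(a)$. Here I would invoke the star-refinement work already carried out inside the proof of Lemma 4.3: the inclusion (4.1), $St(\bigcap_i \alpha_i^{-1}(-,a)(H'_i), \mathcal{C}_{H'}) \subseteq \bigcap_i \alpha_i^{-1}(-,a)(H_i)$, says precisely that the star of the cell of $\mathcal{C}_{H'}$ at $a$ sits inside $U_H(a)$. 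Since $a$ itself lies in that cell, $St(a, \mathcal{C}_{H'})$ is contained in that star, and therefore in $U_H(a)$. Thus $B(a, R_{\mathcal{C}_{H'}}) = St(a, \mathcal{C}_{H'}) \subseteq U_H(a)$, which is the required inclusion. The main obstacle is simply keeping the bookkeeping of stars and cells straight, and confirming that containment of the singleton $\{a\}$ in a cell lets one pass from the star of a whole cell to the star of the point; the substantive inequality (4.1) has already been established, so no new cancellation identity is needed here.

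Having both inclusions, I would conclude that $\tau$ and $\tau'$ induce the same neighbourhood filter at every point and hence coincide, completing the proof that the $T_0$ topology is the topology of a uniformity and therefore completely regular.
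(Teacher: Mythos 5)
Your proposal is correct and follows essentially the same route as the paper: both directions are handled exactly as in the paper's proof, with the easy inclusion coming from $\bigcap_{i}\alpha_{i}^{-1}(-,a)(H_{i})\subseteq St(a,\mathcal{C}_{H})$ and the harder one from the star-refinement inclusion (4.1) already established in the proof of Lemma 4.3, combined with the observation that $a$ lies in its own cell. The only (cosmetic) difference is that you phrase the comparison via neighbourhood filters and explicitly compute $B(a,R_{\mathcal{C}_{H}})=St(a,\mathcal{C}_{H})$, whereas the paper argues directly with open sets.
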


\begin{proof}
First note that the family of all binary relations
$$(\bigcup_{a\in A}
\bigcap_{i=1}^{n}(\alpha_{i}^{-1}(-,a)(H_{i}))^{2})_{H_{1}\in
\mathcal{B}_{1}, H_{2}\in \mathcal{B}_{2},..., H_{n}\in
\mathcal{B}_{n} },$$ \noindent is the base of the uniformity
determined by $\mathbf{C}$ (see Section 2). A subset $O$ of $A$ is
open in the topology $\tau'$ induced by this uniformity if and only
if for any $a\in O$ there exist $H_{1}\in \mathcal{B}_{1}$,
$H_{2}\in \mathcal{B}_{2}$,..., $H_{n}\in \mathcal{B}_{n}$ such that

$$St(a,\mathcal{C}_{H})\subset O$$

\noindent for $H=(H_{1},H_{2},...,H_{n})$. Obviously,
$\bigcap_{i=1}^{n}\alpha_{i}^{-1}(-,a)(H_{i})$ is open in $\tau$ and
is contained in $St(a, \mathcal{C}_{H})$. Therefore any subset $O$
being open in $\tau'$ is also open in $\tau$. For the converse,
consider a subset $O$ of $A$ which is open in $\tau$. Let $a\in O$.
According to Proposition 2.1, there exist $H_{1}\in
\mathcal{B}_{1}$, $H_{2}\in \mathcal{B}_{2}$,..., $H_{n}\in
\mathcal{B}_{n}$ such that
$$\bigcap_{i=1}^{n}\alpha_{i}^{-1}(-,a)(H_{i})\subset O.$$
\noindent But, as shown in the proof of Lemma 4.3, there exists
$(H'_{1},H'_{2},...,H'_{n})\in \mathcal{B}_{1}\times
\mathcal{B}_{2}\times \mathcal{B}_{n}$ such that (4.1) holds. Since
$$St(a,\mathcal{C}_{H})\subset
St(\bigcap_{i=1}^{n}\alpha_{i}^{-1}(-,a))(H'_{i}),
\mathcal{C}_{H'}),$$
\noindent $O$  is open in $\tau'$.

\end{proof}

Lemma 4.3 and Lemma 4.4 obviously imply Theorem 4.2.

\begin{exmp}  Let $A=\{0,1\}$. Let us introduce the structure of a $\mathbb{V}_{2}$-algebra on $A$ as
follows. Let $\theta(i,j,k)=k$ if $i\neq j$ and $\theta(i,j,k)=1-k$
if $i=j$. Moreover, let $\alpha_{1}(i,j)$ be $0$, for any $i,j$; let
$\alpha_{2}(i,j)$ be 0 if $i\neq j$, and be 1 if $i=j$. Besides, let
$e_{1}=0$ and $e_{2}=1$.

The condition (3.1) is obviously satisfied for $i=1$. It is also
satisfied for $i=2$ since the value of the left-hand side of (3.1)
depends only on whether $a_{1}$ and $a'_{1}$ are equal respectively
to $a_{2}$ and $a'_{2}$. This implies that $A$ is a
right-cancellable algebra.

This example in itself can not be considered as an example
illustrating Theorem 4.2, since any topology on $A$ that satisfies
$T_{1}$ is discrete. However, this example gives rise a lot of
contensive ones. Indeed, for any set $I$, and any congruence $R$ on
$(\prod_{I}A)$, the algebra $(\prod_{I}A)/R$ obviously also lies in
the variety of right-cancellable $V_{2}$-algebras. By Theorem 4.2,
any topology on $(\prod_{I}A)/R$ that satisfies $T_{0}$ is
completely regular.
\end{exmp}








\vskip+3mm






\section{Acknowledgment}
This work is supported by Shota Rustaveli National Science
Foundation of Georgia (Ref.: FR-18-10849).

\vskip+2mm

Authors address:

Andrea Razmadze Mathematical Institute of Tbilisi State University,

6 Tamarashvili Str., Tbilisi, 0177, Georgia

E-mail: dalizan@rmi.ge

\end{document}